\newcommand\R{{\mathbb{R}}}
\newcommand\N{{\mathbb{N}}}
\newcommand\C{{\mathbb{C}}}
\newtheorem{theorem}{Theorem}[section]
\newtheorem{proposition}[theorem]{Proposition}
\newtheorem{corollary}[theorem]{Corollary}
\theoremstyle{definition}
\newtheorem{definition}[theorem]{Definition}
\theoremstyle{remark}
\newtheorem{remark}[theorem]{Remark}
\numberwithin{equation}{section}
\def\R{\mathbb{R}}
\def\C{\mathbb{C}}
\begin{document}

\title[Breakdown for Camassa--Holm]
{Breakdown for the Camassa--Holm Equation Using\\
Decay Criteria and  Persistence in Weighted Spaces}

\author{Lorenzo Brandolese}

\address{L. Brandolese: Universit\'e de Lyon~; Universit\'e Lyon 1~;
CNRS UMR 5208 Institut Camille Jordan,
43 bd. du 11 novembre,
Villeurbanne Cedex F-69622, France.}
\email{brandolese{@}math.univ-lyon1.fr}
\urladdr{http://math.univ-lyon1.fr/$\sim$brandolese}


\date{October 17, 2011}

\keywords{Moderate weight, Unique continuation, Blowup, Persitence, Water wave equation, Shallow water.}

\begin{abstract}
We exhibit a sufficient condition in terms of decay at infinity of the initial data
for the finite time blowup of strong solutions to the Camassa--Holm equation:
a wave breaking will occur as soon as the initial data decay faster at infinity than the solitons.
In the case of data decaying slower than solitons
we provide persistence results for the solution
in weighted $L^p$-spaces, for a large class of moderate weights.
Explicit asymptotic profiles illustrate the optimality of these results.
 \end{abstract}

\maketitle

\begin{center}
The original publication is published by \emph
{International Mathematics Research Notices} (Oxford University Press).
Doi:10.1093/imrn/rnr218
\end{center}


\bigskip
\section{Introduction}

In this paper we consider the Camassa--Holm equation on~$\R$, 
\begin{equation}
\label{CH}
 \partial_t u+u\partial_x u=
 P(D)\Bigl(u^2+\textstyle\frac{1}{2}(\partial_x u)^2\Bigr),
\end{equation}
where
\begin{equation}
 P(D)=-\partial_x(1-\partial_x^2)^{-1}
\end{equation}
and $t,x\in\R.$

For smooth solutions, equation~\eqref{CH} can be also rewritten in the more usual form
\begin{equation}
\label{CH1}
\partial_t u-\partial_t\partial^2_x u +3u\partial_x u-2\partial_x u \partial^2_x u -u\partial_x^3 u=0.
\end{equation}
The Camassa--Holm equation arises approximating the Hamiltonian for the Euler's equation
in the shallow water regime. It is now a popular model for the propagation of unidirectional water waves
over a flat bed. Its hydrodynamical relevance has been pointed out in~\cite{CamH93}, \cite{AConL09}.
In this setting, $u(x,t)$ represents the horizontal velocity of the fluid motion at a certain depth.
Interesting mathematical 
properties of such equation include its bi-Hamiltonian structure, the existence of infinitely many
conserved integral quantities (see~\cite{FokF81}), and its geometric interpretation
in terms of geodesic flows on the diffeomorphism group (see~\cite{ACon00}, \cite{Kou99}).

Another important feature of the Camassa--Holm equation is the existence of traveling 
solitary waves, interacting like solitons, also called ``peakons" due to their shape (see \cite{CamH93}):
\begin{equation*}
u_c(x,t)=c\,e^{-|x-ct|}.
\end{equation*}
The peakons replicate a feature that is characteristic for the waves of great height, waves of largest amplitude
that are exact solutions of the governing equations for irrotational water waves (see \cite{AConE07}, \cite{Tol96}).
These solutions are orbitally stable: their shape is stable under small perturbations and therefore these waves 
are physically recognizable, see \cite{AConS00}.

Let us recall that the Cauchy problem associated with equation~\eqref{CH}
is locally well-posed  (in the sense of Hadamard) in $H^s(\R)$ for $s>3/2$.
See, for example, \cite{Dan01}, \cite{Dan03} (see also \cite{DeLKT11} for the periodic case), where
the well-posedness is also established in slightly rougher spaces of Besov type.

There exist strong solutions to the Camassa--Holm equation that exist globally
and strong solutions that blow up in finite time.
Several criteria can be given in order to establish whether or not an initially smooth solution develops
a singularity after some  time.
For example, A.~Constantin and J.~Escher showed that the solution must blow up when the initial datum
is odd with negative derivative at the origin, or when $u_0$ has in some point a slope that is less then
$-\frac{1}{\sqrt2}\|u_0\|_{H^1}$. 
The same authors showed that the solution is global in time provided that
the potential $u_0-\partial^2_xu_0$ associated with the initial value~$u_0$ is a bounded measure with constant sign.
These and many other important results have been surveyed by L.~Molinet in~\cite{Mol04}. 
See also~\cite{ACon00}, \cite{HMPZ07}, \cite{Yin04}.
The criterion on the potential  was later improved by McKean~\cite{McKean04},
who actually gave a necessary and sufficient condition for the global existence of the solution~$u$, in terms
of conditions on the sign of $u_0-\partial^2_xu_0$.

The blowup mechanism of strong solutions is quite well understood:
such solutions remain uniformly bounded up to the blowup time and 
a blowup occurs at a time $T^*<\infty$ 
if and only if $\lim_{t\uparrow T^*}\{ \inf_{x\in\R}\partial_xu (t,x)\}=-\infty$.
This phenomenon is also referred as wave breaking.

For $s<3/2$, the well-posedness of equation~\eqref{CH} in~$H^s$ is not known
(only counterexamples to the uniform well-posedness are available, see
\cite{HimM01}).
However, existence and uniqueness results of weak solutions
have been recently established also in spaces with low regularity. See, for example,
\cite{BreC07}, \cite{BreC07bis}, \cite{XinZ00}, \cite{XinZ02}
for the construction of weak solutions arising from~$H^1$-data.
Such studies are relevant also in connection with  the continuation problem of solutions after the breakdown.

\medskip
\paragraph{\bf Main contributions.\,}

One goal of this paper is to study the persistence and non-persistence
properties of solutions $u\in C([0,T],H^s)$ in  weighted spaces.
By the Sobolev embedding theorem, such solutions satisfy
$\sup_{t\in[0,T]}  \Bigl( \| u(t)\|_p+\|\partial_xu(t)\|_p \Bigr) <\infty$ for all $2\le p\le\infty$.
We will determine a wide class of weight functions~$\phi$ such that
 \[
 \sup_{t\in[0,T]}  \Bigl( \| u(t)\phi\|_p+\|\partial_xu(t)\phi\|_p \Bigr) <\infty.
\]
The analysis of the solutions in weighted spaces is useful to obtain information
on their spatial asymptotic behavior. 
A second motivation of such analysis is provided by the paper of Himonas, Misio\l ek, Ponce and Zhou \cite{HMPZ07},
and by the earlier work of  Escauriaza, Kenig, Ponce and Vega, 
\cite{EKPV1}, \cite{EKPV2}
on the unique continuation of solutions to nonlinear dispersive equations (the Schr\"odinger and
the generalized $k$-generalized KdV equations):
indeed, studying the spatial properties of the solution~$u$ at two different times provides simple
criteria for $u$ to be identically zero. 

In the case of the Camassa--Holm equations,
only exponential weights $\phi(x)=e^{a|x|}$ with $0<a<1$ (see~\cite{HMPZ07}),
or power weights $\phi(x)=(1+|x|)^{c}$ with $c>0$ (see~\cite{NiZ11})
have been considered so far in the literature.
Moreover, such analysis was performed only in a weighted-$L^\infty$ setting:
our class of weights allows us to give a unified approach
and to cover also the case $2\le p<\infty$.

The novel feature of our approach will be the systematic use of {\it moderate weight functions\/}.
Moderate weights are a commonly used tool in time-frequency analysis,
see, for example,~\cite{AldG01}, \cite{Gro07},
but they have received little attention in connection
with PDEs, despite their rich harmonic analysis properties.
As we shall see later on, in the case of the Camassa--Holm equations they lead to
optimal results  (see Theorem~\ref{theo2}, Theorem~\ref{cor1} and its corollary).
Restricting to moderate weights is natural. Indeed, the Camassa--Holm equation is 
invariant under spatial translations, and the weighted spaces 
$L^p_\phi(\R):=L^p(\R,\phi(x)^p\,dx)$ are shift invariant if and only if $\phi$ is a moderate weight.

\medskip
The second purpose of this paper is to establish the following blowup criterion:
if the initial datum $u_0\not\equiv0$ is such that 
 \begin{equation}
  \liminf_{|x|\to\infty} \,e^{|x|}\bigl( |u_0(x)| + |\partial_xu_0(x)|\bigr)=0,
 \end{equation}
then a wave breaking must occur at some time $T^*<\infty$.
In particular, a breakdown occurs when the initial datum satisfies
\[ \Bigl \|e^{|x|} \bigl(|u_0|+|\partial_x u_0|\bigr)\, \Bigr\|_{L^p}<\infty \]
for some $1\le p<\infty$.

 The exponential decay assumption for $u_0$ and its derivative looks optimal, as the peakons
 $u_c(x,t)=ce^{-|x-ct|}$ are well-known global solutions to~\eqref{CH}.
 This second result improves the conclusion  of~\cite{HMPZ07}, where
 the authors showed that compactly supported data lead to a breakdown.


\bigskip

\section{Analysis of the Camassa--Holm equation in weighted spaces}

Let us first recall some standard definitions. 
In general a weight function is simply a non-negative function.
A weight function~$v\colon\R^n\to\R$ is {\it sub-multiplicative\/} if
\[
 v(x+y)\le v(x)v(y),\qquad\forall x,y\in\R^n.
\]

Given a sub-multiplicative function~$v$, by definition 
a positive function $\phi$ is {\it $v$-moderate\/} if and only if
\begin{equation}
 \label{conm}
 \exists\,C_0>0\colon\;
 \phi(x+y)\le C_0\,v(x)\phi(y),\qquad\forall x,y\in\R^n.
\end{equation}
If $\phi$ is $v$-moderate for some sub-multiplicative function~$v$ then we say that~$\phi$
is {\it moderate\/}.
This is the usual terminology in time-frequency analysis papers, \cite{AldG01}, \cite{Gro07}.
Let us recall the most standard examples of such weights. Let
\begin{equation}
 \label{exaw}
 \phi(x)=\phi_{a,b,c,d}(x)=e^{a|x|^b}(1+|x|)^{c}\log(e+|x|)^d.
\end{equation}

We have (see~\cite{Fei79}, \cite{Gro07}):
\begin{itemize}
\item
For $a,c,d\ge0$ and $0\le b\le 1$ such weight is sub-multiplicative.
\item
If $a,c,d\in\R$ and $0\le b\le1$ then $\phi$ is moderate.
More precisely, $\phi_{a,b,c,d}$ is $\phi_{\alpha,\beta,\gamma,\delta}$-moderate
for $|a|\le \alpha$, $b\le\beta$, $|c|\le\gamma$ and $|d|\le\delta$.

\end{itemize}

We will now specify the class of admissible weight functions.

\begin{definition}
\label{defaw}
An {\it admissible weight function} for the Camassa--Holm equation
is a locally absolutely continuous function $\phi\colon\R\to\R$ such that for some $A>0$ and
a.e.~$x\in\R$,  $|\phi'(x)|\le A|\phi(x)|$, and that is $v$-moderate,
for some sub-multiplicative weight function~$v$ satisfying $\inf_{\R}v>0$ and
\begin{equation}
 \label{vadm}
 \int_\R \frac{v(x)}{e^{|x|}} \,dx<\infty.
\end{equation}
\end{definition}

\medskip
We recall  that a locally absolutely continuous function
is a.e. differentiable in~$\R$. Moreover, its a.e. derivative belongs to $L^1_{\rm loc}$
and agrees with its distributional derivative.


We can now state our main result on admissible weights.

\begin{theorem}
 \label{theo2}
 Let $T>0$, $s>3/2$ and $2\le p\le\infty$.
 Let also $u\in C([0,T],H^s(\R))$ be a strong solutions
 of the Cauchy problem for  equation~\eqref{CH}, such that
 $u|_{t=0}=u_0$ satisfies
  \[
  u_0\,\phi\in L^p(\R)\qquad\hbox{and}\qquad
  (\partial_xu_0)\phi\in L^p(\R),
 \]
where $\phi$ is an admissible weight function for the Camassa--Holm equation.
Then, for all $t\in[0,T]$ we have the estimate,
\[
  \| u(t)\phi\|_p+\|\partial_xu(t)\phi\|_p
  \le \Bigl(\| u_0\phi\|_p+\|\partial_xu_0\phi\|_p\Bigr)e^{CMt}, 
 \]
for some constant $C>0$ depending only on $v$, $\phi$ 
(through the constants $A$, $C_0$,  $\inf_\R v$ and $\int \frac{v(x)}{e^{|x|}}\,dx$), and
\[
M\equiv\,\sup_{t\in[0,T]}  \Bigl(\|u(t)\|_\infty+\|\partial_x u(t)\|_\infty\Bigr)<\infty.
\]
\end{theorem}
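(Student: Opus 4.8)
The plan is to work with the equation in the nonlocal form \eqref{CH}, rewriting it via Duhamel's formula along the characteristics of the transport field $u$. Introduce the flow $q(t,x)$ solving $\partial_t q = u(t,q)$, $q(0,x)=x$, and set $w = u\phi$, $z = (\partial_x u)\phi$. Differentiating in $t$ and using \eqref{CH}, one finds that $w$ and $z$ satisfy a coupled system of the schematic form
\[
\partial_t w + u\,\partial_x w = (\partial_x u)\,w + \phi\,P(D)\!\bigl(u^2+\tfrac12(\partial_xu)^2\bigr),
\]
and similarly for $z$, where the extra term $(\partial_x u)w$ comes from the fact that $\phi$ is transported nontrivially — more precisely, the bound $|\phi'|\le A|\phi|$ lets us control $u\,\partial_x\phi$ by $A\|u\|_\infty\,|\phi|$, contributing to the $CM$-type factor. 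The key difficulty is the forcing term involving $P(D)=-\partial_x(1-\partial_x^2)^{-1}$, whose kernel is $\tfrac12\,\mathrm{sgn}(x)e^{-|x|}$ (up to sign), convolved against a quadratic expression in $u,\partial_x u$.

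The main obstacle, as I see it, is estimating $\|\phi\,P(D)(u^2+\tfrac12(\partial_x u)^2)\|_p$ in terms of $\|w\|_p$, $\|z\|_p$, and $M$. Here is where moderateness enters decisively. Writing the convolution $P(D)F = K * F$ with $|K(x)|\le \tfrac12 e^{-|x|}$, we estimate
\[
\phi(x)\,|K*F(x)| \le \tfrac12\int_\R \phi(x)\,e^{-|x-y|}\,|F(y)|\,dy.
\]
Now use the $v$-moderateness bound $\phi(x) = \phi((x-y)+y)\le C_0\,v(x-y)\,\phi(y)$, which converts the above into
\[
\le \tfrac{C_0}{2}\int_\R v(x-y)\,e^{-|x-y|}\,\phi(y)\,|F(y)|\,dy = \tfrac{C_0}{2}\,\bigl(g * (\phi F)\bigr)(x),
\]
where $g(x) = v(x)e^{-|x|}\in L^1(\R)$ precisely by the admissibility condition \eqref{vadm}. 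Young's inequality then gives $\|\phi\,P(D)F\|_p \le \tfrac{C_0}{2}\|g\|_1\,\|\phi F\|_p$ for every $p\in[2,\infty]$, uniformly. It remains to bound $\|\phi F\|_p$ with $F = u^2 + \tfrac12(\partial_x u)^2$: factoring one copy of $u$ or $\partial_x u$ out in $L^\infty$ (absorbed into $M$) and keeping the weighted copy, we get $\|\phi F\|_p \le M\,(\|w\|_p + \|z\|_p)$. So the forcing is controlled by $C\,M(\|w\|_p+\|z\|_p)$ with $C$ depending only on $C_0$ and $\|g\|_1$.

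With these estimates in hand, the final step is a Gronwall argument. The transport terms $u\,\partial_x w$ and $u\,\partial_x z$ are handled by composing with the flow $q(t,\cdot)$: since $q(t,\cdot)$ is a diffeomorphism with Jacobian bounded above and below on $[0,T]$ (because $\|\partial_x u\|_\infty\le M$), the $L^p$ norms of $w(t,q(t,\cdot))$ and $w(t,\cdot)$ are comparable up to a factor $e^{CMt}$. After this change of variables the PDE system becomes, for $Y(t):=\|w(t)\|_p+\|z(t)\|_p$, a differential inequality $\tfrac{d}{dt}Y(t)\le C'M\,Y(t)$ in the integrated/distributional sense, with $C'$ depending only on $A$, $C_0$, $\|g\|_1$ and $\inf_\R v$. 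Gronwall then yields $Y(t)\le Y(0)\,e^{C'Mt}$, which is the claimed estimate. One technical point deserving care: since we only know $u\in C([0,T],H^s)$ and $\phi$ is merely locally absolutely continuous, the manipulations above must first be carried out on a suitable approximation — e.g. truncating $\phi$ to a bounded weight $\phi_N = \phi\cdot\chi(\cdot/N)$ that is genuinely in $L^\infty$ with $|\phi_N'|\le A'|\phi_N|$ uniformly in $N$ — proving the estimate for $\phi_N$ with constants independent of $N$, and then letting $N\to\infty$ by monotone convergence. This truncation step, rather than any single estimate, is the part most likely to require delicate bookkeeping.
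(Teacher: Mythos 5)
Your core estimate is exactly the one the paper uses: rewrite $P(D)F$ as a convolution with a kernel dominated by $\tfrac12 e^{-|x|}$, use $v$-moderateness to convert $\phi(x)|K*F(x)|$ into $C_0\bigl((ve^{-|\cdot|})*(\phi|F|)\bigr)(x)$, invoke Young's inequality with $\|ve^{-|\cdot|}\|_1<\infty$ (condition~\eqref{vadm}), absorb one factor of $u$ or $\partial_xu$ from $F(u)$ into $M$, and close with Gronwall. This is precisely Proposition~\ref{wyoun2} combined with~\eqref{vadm}, so the heart of your argument coincides with the paper's. Where you diverge is in the treatment of the transport term: you compose with the flow $q(t,\cdot)$ and compare $L^p$ norms through the Jacobian, whereas the paper multiplies the equation by $|uf|^{p-2}(uf)$ and integrates by parts, putting the loss into a single term $p^{-1}\|\partial_xu\|_\infty\|uf\|_p^p$. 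Both routes work, but be careful: if you insert the factor $e^{CMt}$ from the Jacobian comparison \emph{inside} the Gronwall inequality (i.e.\ $\frac{d}{dt}Y\le CMe^{cMt}Y$), you end up with a doubly exponential bound rather than the stated $e^{CMt}$; to recover the clean rate you should integrate against the Jacobian measure $J\,dx$ (so that $\int|W|^pJ\,dx=\|w\|_p^p$ exactly) rather than estimating the two norms against each other. The paper's energy method avoids this bookkeeping, and its passage $q\to\infty$ handles $p=\infty$.

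The one genuine flaw is the truncation you propose, $\phi_N=\phi\cdot\chi(\cdot/N)$ with $\chi$ a cutoff. If $\chi$ is compactly supported then $\phi_N$ vanishes identically outside a compact set while being positive inside, so \emph{no} inequality of the form $\phi_N(x+y)\le C\,v(x)\phi_N(y)$ can hold (take $y$ outside the support): $\phi_N$ is not $v$-moderate for any constant, uniform or not, and the weighted Young estimate --- the step your whole proof rests on --- is simply unavailable for $\phi_N$. The uniform bound $|\phi_N'|\le A'\phi_N$ also fails on the transition region where $\chi'\neq0$ but $\chi$ is small. The correct truncation is $f_N=\min\{\phi,N\}$: one checks directly that $|f_N'|\le A f_N$ a.e., and that $f_N(x+y)\le C_1 v(x)f_N(y)$ with $C_1=\max\{C_0,(\inf_\R v)^{-1}\}$ \emph{independent of $N$} (distinguishing the cases $\phi(y)\le N$ and $\phi(y)>N$); this is exactly where the hypothesis $\inf_\R v>0$ in Definition~\ref{defaw} is used. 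With that replacement, your argument goes through and the final monotone-convergence passage $f_N\uparrow\phi$ is as you describe.
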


\medskip

\begin{remark}
\label{rem1}
The basic example of application of Theorem~\ref{theo2} is obtained
by choosing the standard weights $\phi=\phi_{a,b,c,d}$ as in~\eqref{exaw}
with the following conditions
\[
a\ge 0,\qquad c,d\in\R, \qquad 0\le b\le 1,\qquad ab<1.  
\]
(for $a<0$, one has $\phi(x)\to0$ as $|x|\to\infty$: the conclusion of the theorem remains 
true but it is not interesting in this case).
The restriction~$ab<1$ guarantees the validity of condition~\eqref{vadm}
for a multiplicative function $v(x)\ge1$.

The limit case $a=b=1$ is not covered by Theorem~\ref{theo2}.
The result holds true, however, for the weight $\phi=\phi_{1,1,c,d}$  with $c<0$, $d\in\R$
and $\frac{1}{|c|}<p\le\infty$, or more generally when 
$(1+|\!\cdot\!|)^c\log(e+|\!\cdot\!|)^d\in L^p(\R)$.
See Theorem~\ref{cor1} below, which covers the case of such fast growing weights.
\end{remark}

\begin{remark}
Let us consider a few particular cases:
\begin{enumerate}
 \item Take $\phi=\phi_{0,0,c,0}$ with $c>0$, and choose $p=\infty$.
 In this case the Theorem~\ref{theo2} states that
 the condition
 \[
  |u_0(x)|+|\partial_xu_0(x)|\le C(1+|x|)^{-c}
 \]
implies the uniform algebraic decay in $[0,T]$:
\[
  |u(x,t)|+|\partial_xu(x,t)|\le C'(1+|x|)^{-c}.
 \]
Thus, we recover the main result of~\cite{NiZ11}.

\item
Choose $\phi=\phi_{a,1,0,0}(x)$ if $x\ge0$ and $\phi(x)=1$ if $x\le0$ 
with $0\le a<1$. Such weight clearly satisfies the admissibility conditions of Definition~\ref{defaw}.
Applying Theorem~\ref{theo2} with $p=\infty$ we conclude that  the
pointwise decay $O(e^{-ax})$ as $x\to+\infty$ 
is conserved during the evolution.
Similarly, we have persistence of the decay $O(e^{-ax})$ as $x\to-\infty$.
Hence, our Theorem~\ref{theo2} encompasses also Theorem~{1.2} of~\cite{HMPZ07}. 
\end{enumerate}
\end{remark}

\bigskip
\section{Elementary properties of sub-multiplicative and moderate weights}

We collect in this section some basic facts on moderate weights.
We implicitly assume the continuity of the weights $v$ and $\phi$.

The weighted space $L^p_\phi(\R)=L^p(\R,\phi(x)^p\,dx)$
is translation invariant if and only if $\phi$ is a moderate weight.
Indeed, if $\phi$ is $v$-moderate one clearly has 
$\|f(\cdot-y)\|_{L^p_\phi}\le C_0v(y)\|f\|_{L^p_\phi}$.
Conversely, if $1\le p<\infty$ and if $L^p_\phi(\R)$ is translation invariant, then
one easily checks that
\[
 v(x):=\sup_{\|f\|_{L^p_\phi}\le1}\|f(\cdot-x)\|_{L^p_\phi}
\]
is sub-multiplicative and that $\phi$ is $v$-moderate.
See also~\cite{AldG01} for more details.

The interest of imposing the sub-multiplicativity condition on a weight function
is also made clear by the following proposition:

\begin{proposition}
\label{wyoung}
 Let $v\colon\R^n\to\R^+$ and $C_0>0$.
 Then following conditions are equivalent:
 \begin{enumerate}
  \item 
  $ \forall\,x,y\colon\;v(x+y)\le  C_0\,v(x)v(y)$.
 \item
 For all $1\le p,q,r\le\infty$ and for 
 any measurable functions $f_1,f_2\colon\R^n\to\C$ the weighted Young inequalities hold:
 \[
  \|(f_1*f_2)v\|_r\le C_0\|f_1v\|_p\|f_2v\|_q,
  \qquad 1+\frac1r=\frac1p+\frac1q.
 \]
 \end{enumerate}
\end{proposition}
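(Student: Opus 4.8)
The plan is to establish the two implications separately: $(1)\Rightarrow(2)$ by a pointwise domination that reduces everything to the classical unweighted Young inequality, and $(2)\Rightarrow(1)$ by testing the weighted inequality on approximate identities concentrated at two prescribed points.

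For the direction $(1)\Rightarrow(2)$, I would start from $(f_1*f_2)(x)=\int_{\R^n}f_1(x-y)f_2(y)\,dy$ and use the hypothesis in the form $v(x)=v\bigl((x-y)+y\bigr)\le C_0\,v(x-y)\,v(y)$, valid for every $y$. This gives the pointwise bound $|(f_1*f_2)(x)|\,v(x)\le C_0\bigl(|f_1|v*|f_2|v\bigr)(x)$, and then taking $L^r$ norms and applying the classical Young convolution inequality $\|g_1*g_2\|_r\le\|g_1\|_p\|g_2\|_q$ (which holds for $1\le p,q,r\le\infty$ with $1+\frac1r=\frac1p+\frac1q$, with constant $1$) to $g_i=|f_i|v$ yields exactly $\|(f_1*f_2)v\|_r\le C_0\|f_1v\|_p\|f_2v\|_q$. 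This part is routine once the pointwise inequality is written down.

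For $(2)\Rightarrow(1)$, I would specialize to $p=q=r=1$ (an admissible triple, since $1+1=1+1$). Fix $a,b\in\R^n$, let $(\rho_\varepsilon)_{\varepsilon>0}$ be a standard nonnegative mollifier with $\mathrm{supp}\,\rho_\varepsilon\subset B(0,\varepsilon)$ and $\int\rho_\varepsilon=1$, and set $f_1=\rho_\varepsilon(\cdot-a)$, $f_2=\rho_\varepsilon(\cdot-b)$. Then $f_1*f_2=(\rho_\varepsilon*\rho_\varepsilon)(\cdot-a-b)$ is a probability density supported in $B(a+b,2\varepsilon)$. Since $v$ is continuous, hence locally bounded and locally uniformly continuous, one has $\|f_1v\|_1\to v(a)$, $\|f_2v\|_1\to v(b)$ and $\|(f_1*f_2)v\|_1\to v(a+b)$ as $\varepsilon\downarrow0$; passing to the limit in $\|(f_1*f_2)v\|_1\le C_0\|f_1v\|_1\|f_2v\|_1$ gives $v(a+b)\le C_0\,v(a)\,v(b)$.

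The only delicate point — and it is a minor one — is the justification of these three limits, which is where I would spend a line or two: it amounts to noting that $\rho_\varepsilon$ and $\rho_\varepsilon*\rho_\varepsilon$ are probability densities collapsing to Dirac masses at $0$, together with continuity of $v$ at $a$, $b$ and $a+b$. If one preferred to avoid the continuity assumption, the same argument works with $f_i$ replaced by normalized indicator functions of small balls, the limits being justified at Lebesgue points of $v$, which form a dense set. Everything else is bookkeeping.
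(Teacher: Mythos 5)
Your proof is correct. The direction $(1)\Rightarrow(2)$ is exactly the paper's argument: the pointwise domination $v(x)\le C_0\,v(x-y)v(y)$ reduces the claim to the classical unweighted Young inequality applied to $|f_1|v$ and $|f_2|v$. For the converse you take a genuinely different route. The paper specializes to $p=1$, $q=r=\infty$, introduces the kernel $R(x,y)=\frac{v(x)}{v(x-y)v(y)}$ (so it must first reduce to the case where $v$ does not vanish), and deduces $R\le C_0$ by an $L^1$--$L^\infty$ duality argument; this is deliberately parallel to the proof of Proposition~\ref{wyoun2}, so the two converses are handled by one and the same mechanism. You instead take $p=q=r=1$ and test the inequality on mollifiers centred at prescribed points $a$ and $b$, letting the three integrals collapse to $v(a)$, $v(b)$, $v(a+b)$ by continuity (which the paper does assume throughout that section). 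Your argument is more elementary -- no duality, and no need to exclude zeros of $v$, since $v(a)v(b)=0$ simply forces $v(a+b)=0$ in the limit. One small caveat on your closing remark: replacing mollifiers by normalized indicators and working at Lebesgue points only yields the inequality for a dense (full-measure) set of pairs $(a,b)$ with $a+b$ also a Lebesgue point, so recovering the inequality for \emph{all} $x,y$ would still require some regularity of $v$; under the paper's standing continuity assumption this is moot.
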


\begin{proof}
 To see that the first claim implies the second one, one writes $v(x)\le C_0v(x-y)v(y)$ and then applies
 the classical Young inequality to $|f_1|\phi$ and $|f_2|\phi$.
 
 Conversely, take $p=1$ and $q,r=\infty$.
 We can assume that $v$ does not vanish.
 Let $R(x,y)=\frac{v(x)}{v(x-y)v(y)}$.
 Then, for all $h\in L^1(\R^n)$ we have
 \begin{equation*}
  \begin{split}
\biggl|\int h(y)R(x,y)\,dy\biggr| &=\biggl|\int h(x-y)R(x,y)\,dy\biggr|\le C_0\|h\|_1 ,  
  \end{split}
 \end{equation*}
where in the last inequality we applied the weighted Young estimate with $p=1$, $q,r=\infty$,
$f_1=\frac{h}{v}$ and $f_2=\frac{1}{v}$.
By duality this implies $R(x,y)\le C_0$, that is, $v$ satisfies the sub-multiplicativity inequality~(1).
\end{proof}

The moderateness of a weight function is the good condition for
weighed Young inequalities with two different weights.

\begin{proposition}
 \label{wyoun2}
Let $1\le p\le\infty$ and $v$ be a sub-multiplicative weight on~$\R^n$.
The two following two conditions are equivalent:
\begin{enumerate}
 \item $\phi$ is a $v$-moderate weight function (with constant $C_0$).
 \item For all measurable functions $f_1$ and $f_2$ the weighted
Young estimate holds
\begin{equation}
 \label{y1p}
 \|(f_1*f_2)\phi\|_p\le C_0\|f_1v\|_1 \,\|f_2\phi\|_p.
\end{equation}
\end{enumerate}
\end{proposition}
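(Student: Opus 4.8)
The plan is to prove the two implications separately, in the spirit of the proof of Proposition~\ref{wyoung}. The implication $(1)\Rightarrow(2)$ will follow directly from the pointwise moderateness inequality combined with the classical unweighted Young inequality, while $(2)\Rightarrow(1)$ will be recovered by testing~\eqref{y1p} against approximate identities.

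For $(1)\Rightarrow(2)$ I would start from $|(f_1*f_2)(x)|\,\phi(x)\le\int|f_1(y)|\,|f_2(x-y)|\,\phi(x)\,dy$, insert the decomposition $x=y+(x-y)$, and use that $\phi$ is $v$-moderate in the form $\phi\bigl(y+(x-y)\bigr)\le C_0\,v(y)\,\phi(x-y)$. This gives the pointwise bound $|(f_1*f_2)\,\phi|\le C_0\,\bigl((|f_1|\,v)*(|f_2|\,\phi)\bigr)$; taking $L^p$ norms and applying the ordinary Young inequality with exponents $1,p,p$ yields~\eqref{y1p}, for every $1\le p\le\infty$.

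The substantive direction is $(2)\Rightarrow(1)$, where the point is to retrieve~\eqref{conm} with exactly the constant $C_0$. Here one should mollify only one of the two factors at a time: concentrating both factors simultaneously would introduce a spurious dimensional factor coming from $\|\rho_\epsilon*\rho_\epsilon\|_p/\|\rho_\epsilon\|_p$. Concretely, fix $x\in\R^n$, let $(\rho_\epsilon)_{\epsilon>0}$ be a standard mollifier ($\rho_\epsilon\ge0$, $\int\rho_\epsilon=1$, $\operatorname{supp}\rho_\epsilon\subset B_\epsilon$), and apply~\eqref{y1p} with $f_1=\rho_\epsilon(\cdot-x)$ and $f_2$ an arbitrary continuous, compactly supported function. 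Since $v$ is continuous, $\|f_1v\|_1=\int\rho_\epsilon(u)\,v(x+u)\,du\to v(x)$, and $(f_1*f_2)\,\phi\to f_2(\cdot-x)\,\phi$ uniformly on a fixed compact set, hence in $L^p$. Letting $\epsilon\to0$ we obtain
\[
 \|f_2(\cdot-x)\,\phi\|_p\le C_0\,v(x)\,\|f_2\,\phi\|_p,
\]
equivalently $\int|f_2(w)|^p\,\phi(w+x)^p\,dw\le C_0^p\,v(x)^p\int|f_2(w)|^p\,\phi(w)^p\,dw$, for every such $f_2$ and every $x$. Since this comparison holds for the arbitrary weight $|f_2|^p$ and $\phi$, $v$ are continuous, it forces $\phi(w+x)^p\le C_0^p\,v(x)^p\,\phi(w)^p$ for all $w,x$, that is, $\phi$ is $v$-moderate with constant $C_0$ (the case $p=\infty$ is analogous, with $L^\infty$ norms in place of $L^p$ norms). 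The only genuine difficulty here is the bookkeeping in the limit: one must choose the test functions so that the three norms in~\eqref{y1p} are finite and strictly positive and pass to the intended limits without generating an extra constant, which is precisely why a single factor is mollified rather than a convolution of two mollifiers being formed.
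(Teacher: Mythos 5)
Your proof is correct and follows essentially the same route as the paper: the forward direction via the pointwise bound $|(f_1*f_2)\phi|\le C_0\,(|f_1|v)*(|f_2|\phi)$ plus classical Young, and the converse by concentrating the $L^1$ factor at a point, testing against continuous compactly supported $f_2$, passing to the limit by dominated convergence, and localizing to recover the pointwise moderateness inequality with the same constant $C_0$. The only cosmetic difference is that you mollify $f_1$ itself while the paper concentrates $h_1=f_1v$ (an indicator-based approximate identity), which changes nothing of substance.
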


\begin{proof}
 The proof is similar to the previous one and can be found in~\cite{Fei79}, but we give it
 for the reader's convenience.
 It is obvious that the first claim implies the second one.
 Conversely, for all $h_1=f_1v\in L^1(\R)$ and $h_2=f_2\phi\in L^p(\R)$ we have, by assumption,
 \[
 \int\biggl| \int h_1(x-y)h_2(y)R(x,y)\,dy\biggr|^p\,dx\le C_0^p\|h_1\|_1^p\|h_2\|_p^p,
 \]
 with $R(x,y)=\frac{\phi(x)}{v(x-y)\phi(y)}$.
Now choose $h_1=h_{1,n}=n{\bf 1}_{[x_0-\frac{1}{2n},x_0+\frac{1}{2n}]}$, for some $x_0\in\R$ and $n=1,2,\ldots$,
next  choose $h_2$ continuous and compactly supported.
The integral on the left hand side equals
\[
\int n^p \biggl| \int_{x-x_0-\frac{1}{2n}}^{x-x_0+\frac{1}{2n}} h_2(y)R(x,y)\,dy\biggr|^p\,dx.
\]
By the continuity of $y\mapsto h_2(y)R(x,y)$ and the 
dominated convergence theorem,  letting $n\to\infty$ we get
\[
\int |h_2(x-x_0)|^p R(x,x-x_0)^p\,dx\le C_0^p\|h_2\|_p^p,
\]
for all $h_2\in C_c(\R)$.
We deduce by a duality argument that  $R(x,x-x_0)\le C_0$,
for all $x_0\in\R$, that is, $\phi$ is $v$-moderate with constant $C_0$. 
\end{proof}

\medskip

We finish this section by listing further elementary properties
of sub-multiplicative and moderate  weights.
Even though such properties will not be needed in the sequel,
they  shed some light on Definition~\ref{defaw}.
We assume as usual the continuity of $v$ and $\phi$.

\begin{enumerate}
 \item
 If $v\not\equiv0$ is an {\it even\/} sub-multiplicative weight function, 
 then $\inf_R v\ge1$.
\item
Every nontrivial sub-multiplicative or moderate weight grows and decays not faster than
exponentially:
there exists $a\ge0$ such that
\[
 e^{-a}e^{-a|x|}\le \phi(x)\le e^{a}e^{a|x|}.
\]
\item
\label{ederiv}
Let $\phi$ be a locally absolutely continuous
$v$-moderate weight such that $C_0v(0)=1$ (where $C_0$ is the constant
in ~\eqref{conm}).
If $v$ has both left and right derivatives at the origin, then 
for a.e. $y\in\R$, 
\[
 |\phi'(y)|\le A\,\phi(y). 
\]
with $A=C_0\max\{|v'(0-)|,|v'(0+)|\}$.
\end{enumerate}

In fact, if $v'(0+)$ and $\phi'(y+)$ exist, then
$
 \phi'(y+)\le C_0v'(0+)\phi(y)
$,
and if $v'(0-)$ and $\phi'(y-)$ exist, then
$
 \phi'(y-)\ge C_0v'(0-)\phi(y)
$.

We leave  to the reader the simple verification of the first two properties.

\bigskip
\section{Proof of Theorem~\ref{theo2}}

\begin{proof}[Proof of Theorem~\ref{theo2}]
We denote $F(u)=u^2+\frac12 (\partial_x u)^2$.
We also introduce the kernel
\begin{equation*}
  G(x)=\textstyle\frac{1}{2}e^{-|x|}.
\end{equation*}
Then the Camassa--Holm equation~\eqref{CH} can be rewritten as
\begin{equation}
\label{CH2}
 \partial_t u+u\partial_x u
 +\partial_xG*F(u)=0,
\end{equation}

Notice that from the assumption $u\in C([0,T],H^{s})$, $s>3/2$, we have
\[
M\equiv \sup_{t\in[0,T]} \Bigl( \|u(t)\|_\infty+\|\partial_x u(t)\|_\infty\Bigr)<\infty.
\]

For any  $N\in \N\backslash\{0\}$, let us consider the $N$-truncations    
\[
 f(x)=f_N(x)=\min\{\phi(x),N\}.
\]
Observe that $f\colon\R\to\R$ is a locally absolutely continuous function, such that
\[ 
 \|f\|_\infty\le N, \qquad |f'(x)|\le A|f(x)| \quad\hbox{a.e.}
 \]
In addition, if $C_1=\max\{C_0,\alpha^{-1}\}$, where $\alpha=\inf_{x\in\R} v(x)>0$, then
\begin{equation*}
 f(x+y)\le C_1\,v(x)f(y),\qquad\forall x,y\in\R.
\end{equation*}
Indeed, let us introduce the set $U_N=\{x\colon \phi(x)\le N\}$:
if $y\in U_N$, then
$f(x+y)\le \phi(x+y)\le C_0v(x)f(y)$;
if $y\not\in U_N$, then $f(x+y)\le N=f(y)\le \alpha^{-1}v(x)f(y)$.

The constant~$C_1$ being independent on~$N$, this shows that the $N$-truncations
of a $v$-moderate weight are uniformly $v$-moderate with respect to~$N$.

\medskip
We start considering the case $2\le p<\infty$. 
Multiplying the equation~\eqref{CH2} by $f$, next by $|uf|^{p-2}(uf)$
we get, after integration,
\begin{equation}
 \label{din1}
 \frac{1}{p}\, \frac{d}{dt}\Bigl( \|uf\|_{p}^{p}  \Bigr)
 +\int |uf|^{p}(\partial_x u)\,dx
 +\int |uf|^{p-2}(uf)(f\partial_xG*F(u))\,dx=0.
\end{equation}
The two above integrals are clearly finite since $f\in L^\infty(\R)$ and $u(\cdot,t)\in H^s$, $s>3/2$,
hence $\partial_xG*F(u)\in L^1\cap L^\infty$.
Estimating the absolute value of the first integral by $M\|uf\|_{p}^{p}$ and of the second one
by $\|uf\|_{p}^{p-1}\|f(\partial_x G*F(u))\|_{p}$,
we get
\begin{equation}
\label{1pe}
\begin{split}
\frac{d}{dt}\|uf\|_{p} 
 &\le M\|uf\|_{p}+\|f(\partial_xG*F(u))\|_{p}\\
 &\le M\|uf\|_{p}+C_1\|(\partial_xG) v\|_1\|F(u)f\|_{p}\\
 &\le M\|uf\|_{p}+C_2\|F(u)f\|_{p}\\
 &\le M\,\|uf\|_{p}+MC_2\|(\partial_x u)f\|_{p}\,.
\end{split}
\end{equation}
In the second inequality we applied Proposition~\ref{wyoun2},
and in the third we used the pointwise bound
$|\partial_xG(x)|\le \frac{1}{2}e^{-|x|}$ and we applied 
condition~\eqref{vadm}. Here, $C_2$ depends only on~$v$ and~$\phi$.

We now look for a similar inequality for $(\partial_x u)f$.
Differentiating equation~\eqref{CH2} with respect to~$x$, next multiplying by~$f$
we obtain
\begin{equation}
\label{eqmu}
\partial_t[(\partial_x u)f]+uf\partial_x^2u+[(\partial_x u)f](\partial_x u)+f(\partial_x^2G)*F(u)=0.
\end{equation}

Before multiplying this equation by~$|(\partial_x u)f|^{p-2}(\partial_x u)f$ and integrating, let us 
study the second term. Observe that
\begin{equation*}
 \begin{split}
  &\int uf(\partial_x^2 u)\,|(\partial_x u)f|^{p-2}(\partial_x u)f\,dx \\
  &\qquad=\int u |(\partial_x u)f|^{p-2}(\partial_x u)f  
    \bigl[\partial_x\bigl((\partial_x u)f\bigr)-(\partial_xu)(\partial_xf)\bigr]\,dx\\
  &\qquad=\int u\,\partial_x\biggl(\frac{|(\partial_xu)f|^{p}}{p}\biggr)
    -\int u|(\partial_xu)f\bigr|^{p-2}(\partial_xu)f(\partial_x u)(\partial_x f)\,dx.
 \end{split}
\end{equation*}
But $|\partial_x f(x)|\le Af(x)$ for a.e.~$x$, then it follows that
\begin{equation*}
 \begin{split}
 &\biggl| \int uf(\partial_x^2 u)\,|(\partial_x u)f|^{p-2}(\partial_x u)f\,dx \biggr| \\
 &\qquad 
 \le p^{-1} \|\partial_xu\|_\infty\|(\partial_xu)f\|_{p}^{p}
  +A\|u\|_\infty \|(\partial_x u)f\|_{p}^{p}\\
  &\qquad \le M(1+A)\|(\partial_x u)f\|_{p}^{p} .
  \end{split}
\end{equation*}

Now multiplying~\eqref{eqmu} by $|(\partial_x u)f|^{p-2}(\partial_xu)f$,
integrating and using the last estimate, we get (arguing as before)
\begin{equation*}
 \label{pee}
 \begin{split}
 \frac{d}{dt}\|(\partial_xu)f\|_{p}
  &\le M(2+A)\,\|(\partial_xu)f\|_{p}
 +\|f(\partial_x^2G)*F(u)\|_{p}.\\
\end{split}
\end{equation*}
The two functions $(\partial_xu)f$ and $\partial_xu$  belong to $L^2\cap L^\infty$.
Moreover, the kernel $G$ satisfies $\partial_x^2G=G-\delta$, hence we have
$f*(\partial^2_xG*F(u))\in L^1\cap L^\infty$.

In addition, the identity $\partial_x^2G=G-\delta$ and Proposition~\ref{wyoun2}
with condition~\eqref{vadm} imply that
\begin{equation*}
\begin{split}
 \|f(\partial_x^2G*F(u))\|_{p} &\le C_3\|F(u)f\|_{p}\\
 &\le C_3M\bigl(\|uf\|_{p}+\|(\partial_x u)f\|_{p}\bigr).
\end{split}
\end{equation*}
Hence,
\begin{equation}
 \label{2pe}
\frac{d}{dt}\|(\partial_xu)f\|_{p} 
\le C_4M \bigl(\|uf\|_{p}+\|(\partial_x u)f\|_{p}\bigr).
\end{equation}

Now, summing the inequalities~\eqref{1pe}-\eqref{2pe} and then integrating yields,
for all $t\in[0,T]$,
\begin{equation*}
 \begin{split}
 \|u(t)f\|_{p}+\|(\partial_xu)(t)f\|_{p}
  &\le \Bigl( \|u_0f\|_{p}+\|(\partial_xu_0)f\|_{p}\Bigr) \exp\bigl(CMt\bigr).
 \end{split}
\end{equation*}
Here $C$, $C_3$ and $C_4$ are positive constants depending only on~$\phi$ and $v$.
But for a.e.~$x\in\R$, $f(x)=f_N(x)\uparrow\phi(x)$ as $N\to\infty$.
As by our assumptions $u_0\phi\in L^p(\R)$ and $(\partial_xu_0)\phi\in L^p(\R)$,  we arrive at
\begin{equation}
\label{fine}
\begin{split}
 \|u(t)\phi\|_{p}+\|(\partial_xu)(t)\phi\|_{p}
  &\le \Bigl( \|u_0\phi\|_{p}+\|(\partial_xu_0)\phi\|_{p}\Bigr) \exp\bigl(CMt\bigr).
 \end{split}
\end{equation}



\medskip
It remains to treat the case $p=\infty$.
We have $u_0,\partial_x u_0\in L^2\cap L^\infty$ and $f=f_N\in L^\infty$.
Hence, for all $2\le q<\infty$ we have as before
\begin{equation*}
 \begin{split}
 \|u(t)f\|_{q}+\|(\partial_xu)(t)f\|_{q}
  &\le \Bigl( \|u_0f\|_{q}+\|(\partial_xu_0)f\|_{q}\Bigr) \exp\bigl(CMt\bigr).\\
 \end{split}
\end{equation*}
But the last factor in the right-hand side is independent on~$q$. 
Then letting  $q\to\infty$ and using the well-known fact that the $L^\infty$-norm
is the limit (possibly $=+\infty$) of $L^q$ norms as $q\to\infty$, implies that
\begin{equation*}
 \begin{split}
 \|u(t)f\|_{\infty}+\|(\partial_xu)(t)f\|_{\infty}
  &\le \Bigl( \|u_0f\|_{\infty}+\|(\partial_xu_0)f\|_{\infty}\Bigr) \exp\bigl(CMt\bigr).\\
 \end{split}
\end{equation*}
The last factor in the right-hand side is independent on~$N$. 
Now taking $N\to\infty$ implies
that estimate~\eqref{fine} remains valid for $p=\infty$.

\end{proof}


\section{Fast growing weights and exact asymptotic profiles}

As we observed in Remark~\ref{rem1}, Theorem~\ref{theo2}
does not cover some limit cases of fast growing weights.
The purpose of this section is to establish a variant of this theorem 
that can be applied to some $v$-moderate weights~$\phi$ for which
condition~\eqref{vadm} \emph{does not hold}.
We recall that condition~\eqref{vadm} reads
\[
 \int \frac{v(x)}{e^{|x|}}\,dx<\infty.
\]

Let $2\le p\le\infty$.
Instead of assuming~\eqref{vadm}, we will now put the weaker condition
\begin{equation}
\label{vadmp}
ve^{-|\cdot|} \in L^p(\R).
\end{equation}
It easily checked that for any continuous sub-multiplicative weight function~$v$ we have
\[
ve^{-|\cdot|}\in L^1(\R)\;\Longrightarrow\; ve^{-|\cdot|}\in L^p(\R)\quad \forall\,1\le p\le \infty,
\]
so that condition~\eqref{vadmp} is indeed weaker than condition~\eqref{vadm}.
Indeed, let $h=e^{-|\cdot|}v\in L^1(\R)$. The open set
$\{x\colon h(x)>1\}$ is the (possibly empty) disjoint union of open bounded intervals $I_n=(a_n,b_n)$ and $h(a_n)=h(b_n)=1$.
For sufficiently large~$n$, $I_n$ in included in $\R^+$ (or respectively in $\R^-)$ and its length is less than one.
But for all $x,y\ge0$ (respectively, $x,y\le0$) we have $0\le h(x+y)\le h(x)h(y)$, so that for large~$n$,
$\sup_{I_n}h\le \sup_{[-\frac{1}{2},\frac{1}{2}]}{h}$. Thus, $h\in L^1\cap L^\infty$.
This implies $ve^{-|\cdot|}\in L^p(\R)$ for all $1\le p\le \infty$.

\begin{theorem}
 \label{cor1}
Let $2\le p\le \infty$ and $\phi$ be  a $v$-moderate weight function as in Definition~\ref{defaw} satisfying
condition~\eqref{vadmp} instead of~\eqref{vadm}.
Let also 
$u|_{t=0}=u_0$ satisfy
  \[
  \begin{cases}
  u_0\,\phi\in L^p(\R)\\
  u_0\phi^{1/2}\in L^2(\R)
  \end{cases}
  \qquad\hbox{and}\qquad
  \begin{cases}
   (\partial_xu_0)\phi\in L^p(\R),\\
   (\partial_x u_0)\phi^{1/2}\in L^2(\R).
   \end{cases}
 \]
Let also $u\in C([0,T],H^s(\R))$, $s>3/2$ be the strong solution
 of the Cauchy problem for  equation~\eqref{CH}, emanating from~$u_0$
 Then, 
 \[
 \sup_{t\in[0,T]}  \Bigl( \| u(t)\phi\|_p+\|\partial_xu(t)\phi\|_p \Bigr) <\infty.
\]
and
 \[
 \sup_{t\in[0,T]}  \Bigl( \| u(t)\phi^{1/2}\|_2+\|\partial_xu(t)\phi^{1/2}\|_2 \Bigr) <\infty.
\]
   \end{theorem}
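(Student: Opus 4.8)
The idea is to run the same energy estimate as in the proof of Theorem~\ref{theo2}, but now simultaneously at two ``levels'': the full weight $\phi$ (with exponent $p$) and the square-root weight $\phi^{1/2}$ (with exponent $2$). The point is that, although condition~\eqref{vadm} fails for $v$, the key convolution estimate can still be closed by splitting the weight: when we apply the weighted Young inequality $\|(\partial_x G * F(u))\phi\|_p$, we write $\phi(x)\le C_0 v(x-y)\phi(y)$ on one factor, but we only have $ve^{-|\cdot|}\in L^p$, not in $L^1$. So we instead distribute, using $\phi = \phi^{1/2}\cdot\phi^{1/2}$ and the sub-multiplicativity-type bound $v\le C_0 v(\cdot)^{1/2}v(\cdot)^{1/2}$, to interpolate between an $L^p$-in-$\phi$ bound and an $L^2$-in-$\phi^{1/2}$ bound on $F(u)$. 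Concretely, $F(u)\phi = \bigl(u\phi^{1/2}\bigr)\bigl(u\phi^{1/2}\bigr) + \tfrac12(\partial_x u\,\phi^{1/2})^2$, and the Young inequality with $\tfrac1p = \tfrac12 + \tfrac1r$ applied with kernel weighted by $v^{1/2}e^{-|\cdot|/2}\cdot v^{1/2}e^{-|\cdot|/2}$ lets one trade one power of the decaying exponential against one copy of $ve^{-|\cdot|}\in L^p$ and absorb the remaining half-weight into the $L^2$ quantities.

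The concrete steps I would carry out are as follows. First, introduce the $N$-truncations $f=f_N=\min\{\phi,N\}$ exactly as before, note they are uniformly $v$-moderate with the constant $C_1=\max\{C_0,\alpha^{-1}\}$, and also observe that $f^{1/2}$ is uniformly $v^{1/2}$-moderate and satisfies $|(f^{1/2})'|\le \tfrac{A}{2} f^{1/2}$ a.e. Second, repeat the computation leading to \eqref{1pe} and \eqref{2pe} for the pair $(uf^{1/2},(\partial_x u)f^{1/2})$ with $p=2$: the only term that differs is the convolution term, where one uses Proposition~\ref{wyoun2} with $v^{1/2}$ in place of $v$ and the fact that $v^{1/2}e^{-|\cdot|}\in L^1$ — which holds because $ve^{-|\cdot|}\in L^p\subset L^1_{\mathrm{loc}}$ together with the exponential-growth bound $v(x)\le e^a e^{a|x|}$ from the elementary properties, giving $v^{1/2}e^{-|\cdot|}\le e^{a/2}e^{(a/2-1)|x|}$ which is integrable once $a<2$; but in fact any sub-multiplicative $v$ with $ve^{-|\cdot|}\in L^p$ for some finite $p$ already has $v^{1/2}e^{-|\cdot|}$ bounded and integrable by the same ``bounded on $I_n$'' argument used right before the statement. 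This yields
\[
\|u(t)f^{1/2}\|_2+\|(\partial_x u)(t)f^{1/2}\|_2 \le \bigl(\|u_0 f^{1/2}\|_2+\|(\partial_x u_0)f^{1/2}\|_2\bigr)e^{CMt},
\]
and, letting $N\to\infty$ by monotone convergence, the same with $\phi^{1/2}$. Third, armed with the now-proven uniform bound on $\|u(t)\phi^{1/2}\|_2$ and $\|(\partial_x u)(t)\phi^{1/2}\|_2$, redo the $L^p$-in-$f$ estimate: in the convolution term $\|f(\partial_x G * F(u))\|_p$ apply the weighted Young inequality with the splitting $f\le C_1 v^{1/2}(x-y)v^{1/2}(x-y)f^{1/2}(y)f^{1/2}(y)$ — i.e. bound $\|(\partial_x G\, v^{1/2}e^{-|\cdot|/2})* (F(u)f^{1/2})\|_p$ — using $\tfrac1p+1 = \tfrac12+(\tfrac12+\tfrac1p)$ so that $\|\partial_x G\,v^{1/2}e^{-|\cdot|/2}\|_r$ with $\tfrac1r = \tfrac12+\tfrac1p$ is finite (it is dominated by $\|v^{1/2}e^{-|\cdot|/2}\|_r$, finite since $v^{1/2}e^{-|\cdot|/2}$ decays like $e^{-(1/2-a/4)|x|}$ up to the bounded-on-$I_n$ correction), and $\|F(u)f^{1/2}\|_2\lesssim M(\|uf^{1/2}\|_2+\|(\partial_x u)f^{1/2}\|_2)$ is already controlled. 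The $\partial_x^2 G = G-\delta$ manipulation for the $(\partial_x u)f$ equation is handled identically. Summing, integrating, and sending $N\to\infty$ gives the first claimed bound; the case $p=\infty$ follows by the same $L^q\to L^\infty$ limiting argument used at the end of the previous proof.

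The main obstacle — and the place requiring care rather than routine bookkeeping — is the bookkeeping of which weighted $L^1$/$L^r$ spaces the kernels $\partial_x G$ and $\partial_x^2 G$ land in after being multiplied by fractional powers of $v$, and checking that the Hölder/Young exponent relations actually close (the constraint is $\tfrac1r = \tfrac12 + \tfrac1p \le 1$, which is exactly why $p\ge 2$ is needed). One must also be slightly careful that the two estimates are genuinely run in the right order: the $L^2$-in-$\phi^{1/2}$ bound is self-contained and must be established \emph{first}, because the $L^p$-in-$\phi$ estimate feeds on its output. A secondary technical point is justifying that all the integrals appearing are finite at each fixed $N$ and fixed $t$ — this is immediate since $f_N\in L^\infty$, $u(\cdot,t)\in H^s$ with $s>3/2$, and $\partial_x G*F(u),\ \partial_x^2 G*F(u)\in L^1\cap L^\infty$, exactly as in the proof of Theorem~\ref{theo2}.
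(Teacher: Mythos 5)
Your overall architecture coincides with the paper's: first establish the $L^2$ bound for $u\phi^{1/2}$ and $(\partial_x u)\phi^{1/2}$, then feed it into the $L^p_\phi$ estimate to control the convolution term. (For the first step the paper simply invokes Theorem~\ref{theo2} with the weight $\phi^{1/2}$, which is $v^{1/2}$-moderate, after noting that $v^{1/2}e^{-|\cdot|}=(v^{1/2}e^{-|\cdot|/2})\,e^{-|\cdot|/2}\in L^1$ by H\"older, the first factor being in $L^{2p}$ by~\eqref{vadmp}; your justification of this integrability is more roundabout but can be made to work.) The genuine problem is in your third step.

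The Young bookkeeping you propose for $\|f(\partial_xG*F(u))\|_p$ does not close. Pairing a kernel in $L^r$ with $\frac1r=\frac12+\frac1p$ against $F(u)f^{1/2}\in L^2$ requires dominating $f(x)|\partial_xG(x-y)|$ by $g_1(x-y)\,f^{1/2}(y)$ for some fixed $g_1\in L^r$. But moderateness only gives $f(x)\le C_1v(x-y)f(y)$, so the leftover factor satisfies $f(x)/f^{1/2}(y)\le C_1v(x-y)f^{1/2}(y)$ (equivalently $\le C\,v^{1/2}(x-y)f^{1/2}(x)$): an unbounded half-weight always survives on one side and is not a function of $x-y$ alone, so no admissible $g_1$ exists uniformly in $N$. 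The correct pairing --- which is what the paper does, and what your own plan paragraph actually hints at when it says ``absorb the remaining half-weight into the $L^2$ quantities'' --- is to absorb \emph{both} half-weights into the $L^2$ quantities, namely
\[
\|F(u)\phi\|_1=\|u\phi^{1/2}\|_2^2+\tfrac12\|(\partial_xu)\phi^{1/2}\|_2^2,
\]
which is exactly what step two controls, and then to use the weighted Young inequality with exponents $(p,1;p)$ (obtained from the same pointwise domination $f(x)\le C_1v(x-y)f(y)$ together with classical Young):
\[
\|f(\partial_xG*F(u))\|_p\le C_1\|(\partial_xG)v\|_p\,\|F(u)f\|_1\le \tfrac{C_1}{2}\,\|v e^{-|\cdot|}\|_p\,\|F(u)\phi\|_1,
\]
which is precisely where hypothesis~\eqref{vadmp} enters; the $\partial_x^2G=G-\delta$ term is treated the same way, with the extra local term $\|fF(u)\|_p\le M(\|uf\|_p+\|(\partial_xu)f\|_p)$. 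With this correction the remainder of your argument (summing, Gronwall, $N\to\infty$, and the $q\to\infty$ limit for $p=\infty$) goes through as in the paper.
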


\begin{remark}
The two auxiliary conditions  $u_0\phi^{1/2}\in L^2$ and $(\partial_xu_0)\phi^{1/2}\in L^2$
in Theorem~\ref{cor1} are usually very easily checked: they are fulfilled if, for example, 
$\phi^{-1}\in L^{(\frac{p}{2})'}$, where $\frac{1}{q}+\frac{1}{q'}=1$.
Hence, these conditions are automatically satisfied
for example when $\phi(x)\ge (1+|x|)^{\alpha}$  for some $\alpha>1$.
Such two conditions are also fulfilled as soon as $u_0$ and $\partial_x u_0$ belong to $L^{p'}$
\end{remark}

\begin{remark}
\label{rempin}
The main motivation for Theorem~\ref{cor1} is to cover the limit case case 
of the weights $\phi=\phi_{1,1,c,d}$ (see~\eqref{exaw}) that were apparently excluded by Theorem~\ref{theo2}.
Theorem~\ref{cor1} tells us that the statement of Theorem~\ref{theo2} is in fact valid
for such weights provided, for example, $c<0$, $d\in\R$ and $\frac{1}{|c|}<p\le\infty$.

We can also apply this theorem choosing $\phi(x)=\phi_{1,1,0,0}(x)=e^{|x|}$ and $p=\infty$.
it follows that if $|u_0(x)|$ and $|\partial_x u_0(x)|$ are both bounded by $c\,e^{-|x|}$,
then the strong solution satisfies, uniformly in $[0,T]$,
\begin{equation}
\label{expdecay}
|u(x,t)|+ |\partial_x u(x,t)|\le Ce^{-|x|}.
\end{equation}
\end{remark}

The peakon-like decay~\eqref{expdecay}
is the fastest possible decay that is possible to propagate for a nontrivial solution~$u$.
Indeed, arguing as in the proof of Theorem~{1.1} of \cite{HMPZ07} it is not difficult to
see that, for fast enough decaying data (say, $|u_0(x)|+|\partial_xu_0(x)|\le Ce^{-a|x|}$, with $a>1/2$),
the following asymptotic profiles hold:
\begin{equation}
\label{a52}
\begin{split}
&u(x,t)\sim u_0(x)+e^{-x}t\,\Phi(t), \qquad\hbox{as $x\to+\infty$}\\
&u(x,t)\sim u_0(x)-e^{x}t\,\Psi(t), \qquad\hbox{as $x\to-\infty$}
\end{split}
\end{equation}
where $\Phi(t)\not=0$ and $\Psi(t)\not=0$ for all~$t\in[0,T]$ (unless $u\equiv0$). 
Thus, if $u_0(x)=o(e^{-|x|})$, only the zero solution can decay faster than $e^{-|x|}$
at a later time $0<t_1\le T$.
We will prove the validity of the asymptotic profile~\eqref{a52} (under slightly weaker conditions)
 for the reader's convenience at the end of this section using the ideas of~\cite{HMPZ07}.

\begin{proof}[Proof of Theorem~\ref{cor1}]
We start observing that $\phi^{1/2}$ is a $v^{1/2}$-moderate weight such that
$|(\phi^{1/2})'(x)|\le \frac A2\phi^{1/2}(x)$. Moreover, $\inf_\R v^{1/2}>0$.
By condition~\eqref{vadmp}, $v^{1/2}e^{-|x|/2}\in L^{2p}(\R)$, hence H\"older's inequality
implies that $v^{1/2}e^{-|x|}\in L^{1}(\R)$.
Then Theorem~\ref{theo2} applies with $p=2$ to the weight $\phi^{1/2}$ yielding
\begin{equation}
\| u(t)\phi^{1/2}\|_2+\|(\partial_xu)(t)\phi^{1/2}\|_2
  \le \Bigl(\| u_0\phi^{1/2}\|_2+\|(\partial_xu_0)\phi^{1/2}\|_2\Bigr)e^{CMt}.
\end{equation}
Setting as before $F(u)=u^2+\frac 12 (\partial_x u)^2$ this implies
\begin{equation}
\label{estF}
\|F(u)(t)\phi\|_1\le K_0\,e^{2CMt}.
\end{equation}
The constants $K_0$ and $K_1$ below depend only on $\phi$ and on the datum.

Arguing as in the proof of Theorem~\ref{theo2} we obtain, for $p<\infty$, the inequalities
(recall that $f(x)=f_N(x)=\min\{\phi(x),N\}$):
\begin{equation}
\label{ppeb}
\begin{split}
\frac{d}{dt}  \|uf\|_{p} 
 &\le M\|uf\|_{p}  +\|f(\partial_xG*F(u))\|_{p}\\
\end{split}
\end{equation}
and
\begin{equation}
\label{2peb}
\begin{split}
\frac{d}{dt}  \|(\partial_xu)f\|_{p} 
 &\le M(2+A)\|(\partial_x u)f\|_{p}  +\|f(\partial^2_xG*F(u))\|_{p}\\
\end{split}
\end{equation}
Recall that $|\partial_x G|\le \frac{1}{2}e^{-|\cdot|}$. 
Then combining Proposition~\ref{wyoun2} with condition~\eqref{vadmp}
and estimate~\eqref{estF}, we get
\begin{equation*}
\begin{split}
\|f(\partial_xG*F(u))\|_{p}
\le K_1\,e^{2CMt}.
\end{split}
\end{equation*}
The constant in the right-hand side is independent on~$N$.
Similarly, recalling that $\partial^2_xG=G-\delta$, 
\begin{equation*}
\begin{split}
\|f(\partial^2_xG*F(u))\|_{p}
&\le 
\|f(G*F(u))\|_{p}+\|fF(u)\|_p\\
&\le K_1\,e^{2CMt}+M \bigl(\|uf\|_p +\|(\partial_x u)f\|_p \bigr),
\end{split}
\end{equation*}
Plugging the two last estimates in~\eqref{ppeb}-\eqref{2peb}, 
and summing we obtain
\[
 \frac{d}{dt}\Bigl( \|u(t)f\|_p+\|(\partial_x u)(t)f\|_p\Bigr)
 \le  M(3+A)\Bigl( \|u(t)f\|_p+\|(\partial_x u)(t)f\|_p\Bigr) +2K_1\,e^{2CMt}
\]

Integrating 
and finally letting $N\to\infty$
yields the conclusion in the case $2\le p<\infty$. 
The constants throughout the proof are independent on~$p$.
Therefore, for $p=\infty$ one can rely on the result established for
finite exponents~$q$ and then let~$q\to\infty$.
The argument is fully similar to that of Theorem~\ref{theo2}.
\end{proof}

\medskip
We finish this section with the proof of the asymptotic profile~\eqref{a52}.
This can be seen as an extension of Theorem~{1.4} of~\cite{HMPZ07},
where the assumption of compactly supported data of \cite{HMPZ07} 
is relaxed by putting a more general decay condition at infinity.
 
\begin{corollary}
\label{propuc}
Let $s>3/2$ and $u_0\in H^s$, $u_0\not\equiv0$,
such that 
\begin{equation}
\label{ex12}
\sup_{x\in\R} e^{|x|/2} (1+|x|)^{1/2}\log(e+|x|)^{d}\Bigl(|u_0(x)|+|(\partial_xu_0)(x)| \Bigr)<\infty,
\end{equation}
for some $d>1/2$.
Then condition~\eqref{ex12} is conserved uniformly in $[0,T]$ by the strong solution $u\in C([0,T], H^s)$ 
of the Camassa--Holm equation. Moreover,
the following asymptotic profiles (respectively for $x\to+\infty$ and $x\to-\infty)$ hold:
\begin{equation}
\label{prof2}
\begin{cases}
u(x,t)=u_0(x)+e^{-x}t\bigl[\Phi(t)+\epsilon(x,t)\bigr], 
&\quad\hbox{with}\quad
 \lim_{x\to+\infty} \epsilon(x,t)=0,\\
u(x,t)=u_0(x)-e^{x}t\bigl[\Psi(t)+\varepsilon(x,t)\bigr],
&\quad\hbox{with}\quad
 \lim_{x\to-\infty} \varepsilon(x,t)=0,
\end{cases}
\end{equation}
where,  for all $t\in[0,T]$ and some constants $c_1,c_2>0$ independent on~$t$,
\begin{equation}
 \label{c120}
c_1\le \Phi(t)\le c_2,\qquad
c_1\le \Psi(t)\le c_2.
\end{equation}
\end{corollary}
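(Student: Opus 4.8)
The plan is to deduce the persistence of \eqref{ex12} from Theorem~\ref{cor1}, and then to read off the leading term of $u(x,t)-u_0(x)$ directly from the Duhamel formula for \eqref{CH2}.

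\emph{Step 1 (persistence).} I would apply Theorem~\ref{cor1} with $p=\infty$ to the weight $\phi(x)=e^{|x|/2}(1+|x|)^{1/2}\log(e+|x|)^{d}=\phi_{1/2,1,1/2,d}(x)$. This weight is sub-multiplicative (since $1/2,1/2,d\ge0$ and the exponent of $|x|$ is $1$), hence $v$-moderate with $v=\phi$, it satisfies $|\phi'|\le A\phi$ a.e., $\inf_\R v\ge1$, and $ve^{-|\cdot|}=e^{-|x|/2}(1+|x|)^{1/2}\log(e+|x|)^d\in L^\infty(\R)$, so that \eqref{vadmp} holds with $p=\infty$. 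By \eqref{ex12} we have $u_0\phi,(\partial_xu_0)\phi\in L^\infty$, and since $e^{-|x|/2}\in L^1(\R)$ also $u_0\phi^{1/2},(\partial_xu_0)\phi^{1/2}\in L^2(\R)$. Theorem~\ref{cor1} then gives $\sup_{[0,T]}(\|u(t)\phi\|_\infty+\|\partial_xu(t)\phi\|_\infty)<\infty$, which is exactly the conservation of \eqref{ex12}. Squaring yields the uniform pointwise bound $F(u)(y,t)\le C\,e^{-|y|}(1+|y|)^{-1}\log(e+|y|)^{-2d}$ on $\R\times[0,T]$, and since $d>1/2$ makes $(1+|y|)^{-1}\log(e+|y|)^{-2d}$ integrable we obtain
\[
\sup_{t\in[0,T]}\int_\R e^{|y|}F(u)(y,t)\,dy<\infty .
\]
This is the only place the hypothesis $d>1/2$ is used.

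\emph{Step 2 (the profile).} Since $u$ is a strong solution, \eqref{CH2} holds pointwise in $x$, and integrating in time gives $u(x,t)=u_0(x)-\int_0^t(u\partial_xu)(x,s)\,ds-\int_0^t(\partial_xG*F(u))(x,s)\,ds$. For $x>0$, using $\partial_xG(z)=-\tfrac12\,\mathrm{sgn}(z)\,e^{-|z|}$ I would split
\[
(\partial_xG*F(u))(x)=-\tfrac12 e^{-x}\!\int_{-\infty}^{x}\! e^{y}F(u)(y)\,dy+\tfrac12 e^{x}\!\int_{x}^{\infty}\! e^{-y}F(u)(y)\,dy .
\]
By Step 1 the first integral converges, as $x\to+\infty$, to $\lambda^+(s):=\int_\R e^{y}F(u)(y,s)\,dy$ with tail $\int_x^\infty e^{y}F(u)=o(1)$, while the second term is $O\!\big(e^{-x}(1+x)^{-1}\big)=o(e^{-x})$; moreover $|u\partial_xu(x,s)|\le C\,e^{-|x|}(1+|x|)^{-1}\log(e+|x|)^{-2d}=o(e^{-x})$, all uniformly in $s\in[0,T]$. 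Collecting terms gives $u(x,t)=u_0(x)+e^{-x}t\big[\Phi(t)+\epsilon(x,t)\big]$ with $\Phi(t)=\frac1{2t}\int_0^t\lambda^+(s)\,ds$, where $\epsilon(x,t)\to0$ as $x\to+\infty$ by dominated convergence in $s$ (the integrand tends to $0$ pointwise in $s$ and is bounded uniformly in $(x,s)$ by Step 1). The case $x\to-\infty$ is symmetric and yields $\Psi(t)=\frac1{2t}\int_0^t\big(\int_\R e^{-y}F(u)(y,s)\,dy\big)\,ds$.

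\emph{Step 3 (bounds on $\Phi,\Psi$).} The upper bound in \eqref{c120} follows from $\lambda^+(s)\le\int_\R e^{|y|}F(u)(y,s)\,dy$, which by Step 1 is bounded uniformly in $s$. For the lower bound I would observe that $s\mapsto\lambda^+(s)$ is continuous on $[0,T]$: $u\in C([0,T],H^s)\hookrightarrow C([0,T],C^1)$ forces pointwise convergence of $F(u)(\cdot,s_n)$ as $s_n\to s$, and the $s$-independent dominating function $e^{|y|}F(u)(y,s)\le C(1+|y|)^{-1}\log(e+|y|)^{-2d}\in L^1$ allows passing to the limit under the integral. Since $F(u)(\cdot,s)\ge0$ and $F(u)(\cdot,s)\not\equiv0$ --- otherwise $u(\cdot,s)\equiv0$, contradicting $u_0\not\equiv0$ by uniqueness of strong solutions --- $\lambda^+$ is continuous and strictly positive on the compact interval $[0,T]$, hence bounded below by a constant $>0$, and likewise for $\Psi$; this proves \eqref{c120}. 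The step I expect to be most delicate is the near-diagonal estimate in Step 2, namely controlling $\tfrac12 e^{x}\int_x^\infty e^{-y}F(u)(y)\,dy$ and checking that all remainders are genuinely $o(e^{-|x|})$, uniformly enough in $s$ to pass the limit inside the time integral --- this is exactly where the polynomial-logarithmic refinement of the bare exponential decay in \eqref{ex12} does the work, since the bilinear term $u\partial_xu$ would only be $O(e^{-|x|})$ under a pure $e^{-|x|/2}$ bound.
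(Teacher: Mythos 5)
Your proposal is correct and follows essentially the same route as the paper: persistence via Theorem~\ref{cor1} with $p=\infty$ and the weight $\phi_{1/2,1,1/2,d}$, the same splitting of $\partial_xG*F(u)$ into near and far contributions, and the same definitions of $\Phi$ and $\Psi$ as time averages of $\int e^{\pm y}F(u)\,dy$. The only cosmetic differences are that you obtain $\sup_{t}\int e^{|y|}F(u)(y,t)\,dy<\infty$ directly from the pointwise bound together with $d>1/2$, whereas the paper reapplies Theorem~\ref{cor1} with $p=2$ and the weight $e^{|x|/2}$, and that you spell out the continuity and strict positivity argument behind the lower bound in \eqref{c120}, which the paper leaves implicit.
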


\begin{proof}
The fact that condition~\eqref{ex12} 
is conserved uniformly in $[0,T]$ by the strong solution $u\in C([0,T], H^s)$
is a simple application of Theorem~\ref{cor1},
with $p=\infty$ and the weight
$\phi(x)=e^{|x|/2}(1+|x|)^{1/2}\log(e+|x|)^{d}$ .
Integrating Equation~\eqref{CH2} we get
\begin{equation}
\label{duh}
u(x,t)=u_0(x)-\int_0^t\partial_x G*F(u)(x,s)\,ds-\int_0^t u\partial_xu(x,s)\,ds.
\end{equation}
We denoted, as before, $F(u)=u^2+\frac12(\partial_x u)^2$.

We have, for $t\in[0,T]$,
\[
 \biggl|\int_0^t u(x,s)\partial_xu(x,s)\,ds\biggr|\le Ce^{-|x|}\,t(1+|x|)^{-1}\log(e+|x|)^{-2d}. 
\]
Then the last term in~\eqref{duh} can be included inside the lower order terms
of the asymptotic profiles~\eqref{prof2}.

For $0<t\le T$ let us  set 
\[
 h(x,t)=\frac{1}{t}\int_0^tF(u)(x,s)\,ds
 \]
and
\begin{equation*}
 \begin{split}
  \Phi(t)=\frac{1}{2}\int_{-\infty}^\infty e^{y}h(y,t)\,dy, \qquad
  \Psi(t)=\frac{1}{2}\int_{-\infty}^\infty e^{-y}h(y,t)\,dy.
 \end{split}
\end{equation*}
The function $(1+|\cdot|)^{-1/2}\log(e+|\cdot|)^{-d}$ belongs to $L^2(\R)$. 
Then applying Theorem~\ref{cor1}, now  with $p=2$ and the weight $\phi(x)=e^{|x|/2}$ yields 
$\int e^{|y|}h(y,t)\,dy<\infty$.
We extend by continuity the definition of~$\Phi$ and $\Psi$ at $t=0$,
setting $\Phi(0)=\frac12\int_{-\infty}^\infty e^yF(u_0)(y)\,dy$
and $\Psi(0)=\frac12\int_{-\infty}^\infty e^{-y}F(u_0)(y)\,dy$.
The assumption
$u_0\not\equiv0$, ensures the validity of estimates~\eqref{c120} with $c_1,c_2>0$ for all $t\in[0,T]$.

Now 
using $\partial_xG(x-y)=-\frac{1}{2}{\rm sign}(x-y)e^{-|x-y|}$ we get
\[
\begin{split}
-\int_0^t\partial_x G*F(u)(s)\,ds
&=\frac{e^{-x}t}{2}\int_{-\infty}^x e^yh(y,t)\,dy-\frac{e^x t}{2} \int_x^\infty e^{-y}h(y,t)\,dy\\
&=e^{-x}t\biggl[\Phi(t) - \frac{1}{2}\int_x^{+\infty}\bigl(e^y+e^{2x-y}\bigr)h(y,t)\,dy\biggr]. \\
\end{split}
\]
But
\[
0\le \int_x^{+\infty}\bigl(e^y+e^{2x-y}\bigr) h(y,t)\,dy
 \le 2\int_x^\infty e^yh(y,t)\,dy\,\longrightarrow0, \qquad\hbox{as $x\to+\infty$,}
\]
by the dominated convergence theorem.
This proves the first asymptotic profile of~\eqref{prof2}.

In the same way, 
\[
\begin{split}
-\int_0^t\partial_x G*F(u)(s)\,ds
&=-e^{x}t\biggl[\Psi(t) - \frac{1}{2}\int_{-\infty}^{x}\bigl(e^{-y}+e^{y-2x}\bigr)h(y,t)\,dy\biggr] \\
\end{split}
\]
and
\[
0\le \int_{-\infty}^x\bigl(e^{-y}+e^{y-2x}\bigr) h(y,t)\,dy
 \le 2\int_{-\infty}^x e^{-y}h(y,t)\,dy\,\longrightarrow0, \qquad\hbox{as $x\to-\infty$.}
\]
This establish the second asymptotic profile of~\eqref{prof2}.
\end{proof}

Applying Corollary~\eqref{propuc} we immediately recover 
the results on the unique continuation for the Camassa--Holm equation
in~\cite{ACon05}, \cite{Hen05}, \cite{HMPZ07}:  for example our profiles~\eqref{prof2} tell us
that only the zero solution can be compactly supported (or decay faster than $e^{-|x|}$)
at two different times $t_0$ and $t_1$.


\bigskip
\section{Breakdown of solutions of the Camassa--Holm equation}

Our next goal is to establish the following blowup criterion based on decay properties.
We show that initial data decaying faster (even if only in a weak sense) than the peakons
 $u_c(x,t)=ce^{-|x-ct|}$ lead to a wave breaking effect.
Our conclusion  (Theorem~\ref{theo1} below) 
strengthens that of~\cite{HMPZ07}, where the authors 
proved that compactly supported data
lead to a breakdown of the solution.

\begin{theorem}
 \label{theo1}
 Let $u_0\not\equiv0$ be such that $u_0\in H^s(\R)$, $s>5/2$, and
 \begin{equation}
  \liminf_{|x|\to\infty} \,e^{|x|}\bigl( |u_0(x)| + |\partial_xu_0(x)|\bigr)=0.
 \end{equation}
 Then the unique strong solution~$u$ 
 of the Camassa--Holm equation~\eqref{CH} such that $u|_{t=0}=u_0$
breaks down in finite time. More precisely, if $u\in C([0,T^*),H^s)$, then $T^*<\infty$
and at the maximal time $T^*$ one has 
 $\int_0^{T^*}\|\partial_x u(t)\|_\infty\,dt=+\infty$.

In particular, a breakdown occurs when the initial datum satisfies
\[ \Bigl \|e^{|x|} \bigl(|u_0|+|\partial_x u_0|\bigr)\, \Bigr\|_{L^p}<\infty \]
for some $1\le p<\infty$.
\end{theorem}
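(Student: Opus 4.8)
The plan is to argue by contradiction: assume the solution $u\in C([0,T^*),H^s)$ is global in time in the sense that $\int_0^{T^*}\|\partial_xu(t)\|_\infty\,dt<\infty$ (or even $T^*=\infty$), and then derive from the decay hypothesis that $u_0\equiv 0$, contradicting $u_0\not\equiv0$. The key tool is the representation along characteristics. Let $q(t,x)$ denote the flow map solving $\partial_t q(t,x)=u(t,q(t,x))$, $q(0,x)=x$; it is a diffeomorphism of $\R$ for each $t$, and $q_x(t,x)=\exp\bigl(\int_0^t\partial_xu(s,q(s,x))\,ds\bigr)$ stays bounded above and below as long as $\int_0^t\|\partial_xu\|_\infty\,ds<\infty$. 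Writing the Camassa--Holm equation as in~\eqref{CH2}, one obtains along characteristics the ODE
\[
\frac{d}{dt}\bigl(u(t,q(t,x))\bigr)=-\,\partial_xG*F(u)\bigl(t,q(t,x)\bigr),
\]
and a companion identity for $\partial_xu$ using $\partial_x^2G=G-\delta$, namely
\[
\frac{d}{dt}\bigl(\partial_xu(t,q(t,x))\bigr)=u^2-\tfrac12(\partial_xu)^2-G*F(u),
\]
all evaluated at $(t,q(t,x))$.

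Next I would introduce the two quantities that diagonalize the right-hand sides, exactly as in the blow-up literature (Constantin--Escher, McKean, and used in~\cite{HMPZ07}): set
\[
M(t,x)=\bigl(u+\partial_xu\bigr)\bigl(t,q(t,x)\bigr)e^{q(t,x)},\qquad
N(t,x)=\bigl(u-\partial_xu\bigr)\bigl(t,q(t,x)\bigr)e^{-q(t,x)}.
\]
A direct computation, using $\partial_xG(x)=-\tfrac12\mathrm{sign}(x)e^{-|x|}$ and splitting the convolution integral at the point $q(t,x)$, shows that $\partial_tM$ and $\partial_tN$ are given by absolutely convergent integrals with a definite sign: one finds $\partial_tM(t,x)\ge 0$ and $\partial_tN(t,x)\le 0$ (the standard monotonicity of these quantities), with the sign strict unless the solution vanishes. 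The hypothesis $\liminf_{|x|\to\infty}e^{|x|}(|u_0|+|\partial_xu_0|)=0$ produces a sequence $x_k\to+\infty$ (or $-\infty$) along which $M(0,x_k)\to 0$ (resp. $N(0,x_k)\to0$); by Theorem~\ref{cor1} applied with the weight $\phi(x)=e^{|x|}$ and $p=\infty$ — which requires only $s>3/2$ and is legitimate since the weaker condition~\eqref{vadmp} is satisfied by $v(x)=e^{|x|}$ (indeed $ve^{-|x|}\equiv1\in L^\infty$) together with the auxiliary $L^2$ conditions, which hold here because one may instead use a slightly smaller weight as in Corollary~\ref{propuc}, or directly Theorem~\ref{theo2} with $\phi(x)=e^{a|x|}$, $a<1$ — the solution retains comparable decay, so $M(t,\cdot)$ and $N(t,\cdot)$ stay bounded and in fact $\int e^{|y|}F(u)(t,y)\,dy<\infty$ uniformly on compact time intervals.

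The decisive step: combine the monotonicity with the decay. Fix any $t_1\in(0,T^*)$. Using $\partial_tM\ge0$ and evaluating the explicit integral formula for $\partial_tM$ at a point $x_k$ with $q(t,x_k)$ large, one shows that $M(t_1,x_k)\ge M(0,x_k)+c\,t_1\int e^{y}F(u)(s,y)\,dy\cdot(\text{something bounded below})$ — more precisely the asymptotic-profile computation of Corollary~\ref{propuc} gives $M(t_1,x_k)=M(0,x_k)+t_1\bigl(2\Phi(t_1)+o(1)\bigr)$ as $k\to\infty$, where $\Phi(t_1)=\tfrac12\int e^{y}h(y,t_1)\,dy$ and $h=\tfrac1{t_1}\int_0^{t_1}F(u)(s,\cdot)\,ds\ge0$. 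But persistence in the weighted space forces $M(t_1,x_k)\to 0$ as $k\to\infty$ (this is the content of $u(t_1)\phi$, $\partial_xu(t_1)\phi\in L^\infty$ evaluated along $q(t_1,x_k)\to+\infty$, noting $q(t_1,\cdot)$ is proper). Hence $\Phi(t_1)=0$, which forces $h\equiv0$, i.e. $F(u)\equiv0$ on $[0,t_1]$, i.e. $u\equiv0$ — contradiction. The same argument with $N$ and $x_k\to-\infty$ covers the other case. Therefore $T^*<\infty$ and $\int_0^{T^*}\|\partial_xu(t)\|_\infty\,dt=+\infty$; the final "in particular" is immediate since $\|e^{|x|}(|u_0|+|\partial_xu_0|)\|_{L^p}<\infty$ for finite $p$ forces the $\liminf$ to vanish.

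I expect the main obstacle to be the bookkeeping around applying the persistence theorem at the exact endpoint weight $\phi(x)=e^{|x|}$: Theorem~\ref{theo2} excludes $a=b=1$, so one must route through Theorem~\ref{cor1}/Corollary~\ref{propuc} and verify the auxiliary $L^2$ hypotheses, or alternatively work with $\phi(x)=e^{a|x|}$ for $a$ slightly less than $1$ and check that the $\liminf$ condition still survives the loss in the exponent — the latter is cleaner, since $e^{a|x|}(|u_0|+|\partial_xu_0|)\le e^{-(1-a)|x|}\cdot e^{|x|}(|u_0|+|\partial_xu_0|)$, and the $\liminf$ being $0$ along a subsequence suffices to run the monotonicity argument. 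A secondary technical point is justifying that the characteristics $q(t,\cdot)$ are well-defined and proper, and that differentiation under the convolution integral defining $\partial_tM$, $\partial_tN$ is legitimate — both follow from $u\in C([0,T^*),H^s)$ with $s>5/2$ (so $u,\partial_xu,\partial_x^2u$ are bounded on compact time intervals as long as the solution exists) together with the weighted integrability $\int e^{|y|}F(u)(s,y)\,dy<\infty$ supplied by the persistence result.
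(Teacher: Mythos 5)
Your argument has a fatal gap at the decisive step. You claim that weighted persistence (Theorem~\ref{cor1} with $\phi(x)=e^{|x|}$) forces $M(t_1,x_k)\to 0$ along the sequence $x_k$. It does not: persistence gives only the uniform bound $e^{|x|}\bigl(|u(t_1,x)|+|\partial_xu(t_1,x)|\bigr)\le C$, i.e.\ boundedness of $M(t_1,\cdot)$, never convergence to zero along a particular sequence. Worse, Corollary~\ref{propuc} --- which you invoke --- shows that the vanishing is destroyed instantaneously: for data to which it applies one has $e^{x_k}u(x_k,t_1)\to t_1\Phi(t_1)>0$ along $x_k\to+\infty$, so the premise of your contradiction ($M(t_1,x_k)\to0$ for $t_1>0$) is exactly what fails. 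What your computation actually yields is unique continuation (a nontrivial solution cannot decay faster than $e^{-|x|}$ at two distinct times), which is already the content of Corollary~\ref{propuc}; it cannot yield blowup, because the bare assumption ``the solution is global'' gives you nothing to contradict. A secondary error: the pointwise monotonicity $\partial_tM\ge0$, $\partial_tN\le0$ for $M=(u+\partial_xu)e^{q}$, $N=(u-\partial_xu)e^{-q}$ is not true --- computing $\frac{d}{dt}\bigl[(u+\partial_xu)(t,q)e^{q}\bigr]$ produces the unsigned term $2u^2+u\,\partial_xu-\frac12(\partial_xu)^2$ in addition to the signed convolution piece; the genuinely monotone quantities in this circle of ideas are the integrals $\int e^{\pm y}m(t,y)\,dy$ with $m=u-\partial_x^2u$.

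The missing ingredient is a criterion characterizing \emph{global} existence, and that is the engine of the paper's proof: assuming $T^*=\infty$, McKean's theorem forces $m_0=u_0-\partial_x^2u_0$ either to have constant sign or to satisfy $m_0\le0$ on $(-\infty,x_0]$ and $m_0\ge0$ on $[x_0,\infty)$ for some $x_0$. On the other hand, integrating by parts, $\int_{a_n}^{A_n}e^{x}m_0\,dx=e^{A_n}(u_0-\partial_xu_0)(A_n)-e^{a_n}(u_0-\partial_xu_0)(a_n)\to0$ along sequences supplied by the $\liminf$ hypothesis (and similarly with the weight $e^{-x}$), and an elementary comparison of these two weighted integrals of $m_0$ rules out both admissible sign configurations unless $m_0\equiv0$. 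This is a pure statement about the initial datum; it is also where $s>5/2$ is used (so that $m_0$ is a genuine function). If you want a dynamical proof along your lines, you would still have to route it through such a global-existence criterion; persistence and asymptotic profiles alone cannot close the argument. Your final remark deducing the ``in particular'' clause from $L^p$-integrability, $p<\infty$, is correct.
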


\begin{proof}[Proof of Theorem~\ref{theo1}]
We refine the argument used in~\cite{HMPZ07}, relying on the study of the sign of the difference between
the solution and its second derivative, $m(x,t)=u(x,t)-\partial_x^2u(x,t)$. Let $m_0(x)=m|_{t=0}(x)$.
By contradiction, assume that we have~$T^*=\infty$. Applying a theorem of McKean~\cite{McKean04}
(see also \cite{JiaNZxx} for a recent alternative proof of McKean's theorem and \cite{ACon00} 
for a previous partial result) we must have
\begin{enumerate}
 \item[i)] Either that $m_0(x)$ is of constant sign in~$\R$,
 \item[ii)] Or $m_0$ changes sign in~$\R$ and $\exists\, x_0\in\R$ such that
 \begin{equation*}
  \begin{cases}
   m_0(x)\le0 &\hbox{for $x\le x_0$},\\ m_0(x)\ge0 &\hbox{for $x\ge x_0$}.
  \end{cases}
 \end{equation*}
\end{enumerate}
Let us show that both (i) and (ii) do not hold.

\medskip
From our assumption we can find two real sequences $(a_n)$ and $(A_n)$ such 
that $a_n\to-\infty$ and $A_n\to+\infty$ and such that 
\begin{equation*}
\begin{split}
\int_{a_n}^{A_n}e^xm_0(x)\,dx
&= \int_{a_n}^{A_n}e^x(u_0-\partial_x^2u_0)(x)\,dx\\
&=e^{A_n}(u_0-\partial_xu_0)(A_n) - e^{a_n}(u_0-\partial_xu_0)(a_n)\\
&\quad\longrightarrow0, \qquad\hbox{as $n\to\infty$}.
\end{split}
\end{equation*}
As $m_0\not\equiv0$, this implies that $m_0$ must change sign in~$\R$, and that (i) does not hold.

If (ii) were satisfied, then 
we could find $x_1<x_0$ such that $m_0(x_1)<0$. 
Hence, there exists $\delta>0$
(depending only on the values of $m_0(x)$ in the interval $[x_1,x_0]$)
such that, for $n\to\infty$,
\begin{equation*}
 \begin{split}
  -e^{x_0}\int_{a_n}^{x_0} m_0(x)\,dx
  &\ge -\int_{a_n}^{x_0}e^x m_0(x)\,dx + \delta
  =\int_{x_0}^{A_n} e^x m_0(x)\,dx+\delta+o(1)\\
  &\ge e^{x_0}\int_{x_0}^{A_n}m_0(x)\,dx+\delta+o(1).
 \end{split}
\end{equation*}
Then we get
\begin{equation}
 \label{ine1}
 -\int_{a_n}^{x_0} m_0(x)\,dx\ge \int_{x_0}^{A_n}m_0(x)\,dx +\delta e^{-x_0}+ o(1),
 \qquad\hbox{as $n\to+\infty$}.
\end{equation}

In the same way, there exist two sequences $(b_n)$ and $(B_n)$,
such that $b_n<a_n$ and $B_n<A_n$, with  $B_n\to+\infty$ and such that
\begin{equation*}
\begin{split}
\int_{b_n}^{B_n}e^{-x}m_0(x)\,dx
&=-e^{-B_n}(u_0+\partial_xu_0)(B_n) + e^{-b_n}(u_0+\partial_xu_0)(b_n)
\longrightarrow0,
\end{split}
\end{equation*}
as $n\to\infty$.

Moreover, as $n\to+\infty$,
\begin{equation*}
 \begin{split}
  -e^{-x_0}\int_{b_n}^{x_0}m_0(x)\,dx
  &\le -\int_{b_n}^{x_0}e^{-x}m_0(x)\,dx   =\int_{x_0}^{B_n} e^{-x}m_0(x)\,dx+o(1)\\
  &\le e^{-x_0}\int_{x_0}^{B_n}m_0(x)\,dx+o(1).
 \end{split}
\end{equation*}
Thus,
\begin{equation}
 \label{ine2}
 -\int_{b_n}^{x_0}m_0(x)\,dx\le \int_{x_0}^{B_n} m_0(x)\,dx+o(1), \qquad\hbox{as $n\to+\infty$}.
\end{equation}
Combining inequalities~\eqref{ine1}-\eqref{ine2} we get,
as $n\to+\infty$,
\begin{equation}
\begin{split}
 \delta e^{-x_0}+\int_{x_0}^{A_n} m_0(x)\,dx+o(1)
 &\le -\int_{a_n}^{x_0} m_0(x)\,dx \le -\int_{b_n}^{x_0} m_0(x)\,dx\\
 &=\int_{x_0}^{B_n} m_0(x)\,dx+o(1) \le \int_{x_0}^{A_n} m_0(x)\,dx+o(1).
\end{split}
\end{equation}

Letting $n\to+\infty$ we contradict the fact that $\delta>0$. 
\end{proof}

\bigskip

\section{Acknowledgements}
The author would like to thank Yong Zhou for hosting him in Zhejiang Normal University and for the many
interesting discussions on the topic of the present paper.
The author is also grateful to the referee for his observations that helped him to
improve the first version.


\bibliographystyle{alpha}

\end{document}